\numberwithin{equation}{section}
\newtheorem{prop}{Proposition}
\newtheorem{lemma}[prop]{Lemma}
\newtheorem{thm}[prop]{Theorem}
\newtheorem{cor}[prop]{Corollary}
\numberwithin{prop}{section}
\newtheorem*{acknow}{Acknowledgment}
\theoremstyle{definition}
\newtheorem{rmk}[prop]{Remark}
\definecolor{c1}{rgb}{0.2,0.4,0.5}
\definecolor{c2}{rgb}{0.1,0.3,0.5}
\definecolor{c3}{rgb}{0.2,0.7,0.5}
\newcommand{\nab}{\nabla}
\newcommand{\abs}[1]{\left\lvert#1\right\rvert}
\DeclareMathOperator{\Ric}{Ric}
\title[Remarks on Entropy Formulae for Linear Heat Equation]{Remarks on Entropy Formulae for Linear Heat Equation}
\date{July 20th, 2022}
\author[Ji]{Yucheng Ji}
\email{\href{mailto:jiyucheng1991@gmail.com}{jiyucheng1991@gmail.com}}
\address{Shanghai Jiao Tong University, 800 Dongchuan RD, Minhang District, Shanghai 200240, China}\address{Huawei Technologies, No.2222 New Jinqiao RD, Pudong New District, Shanghai 201206, China
}
\begin{document}
\maketitle

\begin{abstract}
In this note, we prove some new entropy formula for linear heat equation on static Riemannian manifold with nonnegative Ricci curvature. The results are analogies of Cao and Hamilton's entropies for Ricci flow coupled with heat-type equations.
\\
\\
\noindent \textbf{MSC.} primary, 53E99; secondary, 58J35
\\
\\
\noindent \textbf{Keywords.} Riemannian manifold, Heat flow, Harnack inequality, Entropy formula, Monotonicity
\end{abstract}

\section{Introduction} 
The study of differential Harnack inequalities of heat-type equations on manifolds was initiated from a seminal paper by P. Li and S.-T. Yau \cite{MR834612}, where they proved a gradient estimate for positive solutions of heat equation on Riemannian manifolds with nonnegative Ricci curvature. By integrating this inequality along space-time paths, they also obtained a sharp form of Harnack inequality. Later on, their ideas was adapted by R. Hamilton to the case of Ricci flow: he first proved an analogy of Li-Yau's result on surfaces \cite{MR954419} and then got a much more complicated, matrix version in the high-dimensional case \cite{MR1198607}. Around the same time, similar estimate was also obtained by H.-D. Cao for K\"ahler-Ricci flow \cite{MR1172691}. More results along this line could be found in \cite{MR2488944} and references therein. This approach to the differential inequalities which yield Harnack type estimates was named as Li-Yau-Hamilton by L. Ni and L.-F. Tam \cite{MR1981036}.

Another breakthrough in Ricci flow was the Li-Yau type inequality of conjugate heat equation (coupled with Ricci flow), and related entropy formula along with reduced volume monotonicity, discovered by G. Perelman \cite{P}. His results finally led him to the proof of Poincar\'e conjecture and Geometrization conjecture. Perelman's beautiful theorems and their powerful applications inspire people to find other possible formulae of Ricci flow (maybe coupled with heat equation or conjugate heat equation). In \cite{MR2433961}\cite{MR2570311}, X. Cao and R. Hamilton developed a unified way to find all possible (scalar) Li-Yau type inequalities under these settings, namely, interpolation of all possible terms in differential Harnack quantities. To illustrate the difference between their work and Perelman's, we shall recall that Perelman's entropy is for the fundamental type (heat-kernel type) solution of conjugate heat equation (under Ricci flow); in contrast, Cao and Hamilton considered both forward and backward heat equations with potentials, and general positive solutions instead of fundamental type solutions of those heat equations.

On the other hand, it seems to be a general principle that every differential Harnack estimate for Ricci flow would have a counterpart for heat equation with static metric. Matrix Li-Yau-Hamilton inequality for linear heat equation was proved by Hamilton himself in \cite{MR1230276} ; the corresponding result in the K\"ahler case was settled by H.-D. Cao and L. Ni in \cite{MR2148797}. After Perelman's breakthrough, L. Ni \cite{MR2030576}\cite{MR2051693} found an entropy formula for linear heat equation on Riemannian manifold with static metric, and also obtained some geometric applications. His result can be seen as `the linear version' of Perelman's entropy. L. Ni and L.-F. Tam \cite{MR1981036}\cite{MR2032112} also developed such theorems for heat equation on $(1,1)$-forms, L. Ni and Y. Niu \cite{MR2828586} on $(p,p)$-forms on K\"ahler manifolds.

In this note, we will combine the ideas of Ni \cite{MR2030576} and of Cao and Hamilton \cite{MR2433961}\cite{MR2570311}. Namely, we are going to prove an entropy formula for linear heat equation which is the counterpart of entropy formulae for Ricci flow proved by Cao and Hamilton, just as Ni's entropy is the counterpart of Perelman's entropy in the case of linear heat equation on static manifold. We will prove:

\begin{thm}\label{thm1.1}
Let $(M,g)$ be a closed Riemannian manifold of dimension $n$ and with nonnegative Ricci curvature.
Let $f$ be a positive solution to the heat equation
\begin{equation}\label{eq1.1}
\frac{\partial f}{\partial t}=\Delta f.
\end{equation}
Here the Laplacian is defined as $g^{ij}\partial_{i}\partial_{j}$. Let $f=e^{-u}$, and we define
\begin{equation*}
F=\int_{M}(t^2\abs{\nab u}^2-2nt)e^{-u}dV_{g},
\end{equation*}
then for all time $t>0$,
\begin{equation*}
F\leq0
\end{equation*}
and
\begin{equation*}
\frac{dF}{dt}\leq0.
\end{equation*}
\end{thm}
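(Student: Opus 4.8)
The plan is to prove both inequalities by direct differentiation under the integral sign, exploiting the nonnegative Ricci curvature through the Bochner formula. Let me set up the basic quantities. Writing $f = e^{-u}$, the heat equation $\partial_t f = \Delta f$ translates into an evolution equation for $u$. A direct computation gives $\partial_t u = \Delta u - |\nabla u|^2$, which is the fundamental relation I will use throughout. I would begin by deriving this identity carefully, since every subsequent calculation depends on it.

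The strategy for the two claims is somewhat different, so let me treat them in order. For the monotonicity $\partial F/\partial t \le 0$, the plan is to differentiate $F = \int_M (t^2|\nabla u|^2 - 2nt)e^{-u}\,dV_g$ in time. I would first compute $\partial_t(|\nabla u|^2)$ using the evolution of $u$, then use the Bochner formula
\begin{equation*}
\Delta |\nabla u|^2 = 2|\nabla^2 u|^2 + 2\langle \nabla u, \nabla \Delta u\rangle + 2\Ric(\nabla u, \nabla u)
\end{equation*}
to rewrite the resulting terms. Crucially, the Ricci term enters with a favorable sign, so that $\Ric \ge 0$ will be exactly what is needed to discard a nonnegative quantity and obtain the inequality rather than an identity. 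I expect the calculation to produce, after integration by parts against the measure $e^{-u}\,dV_g$, an expression of the form $\partial_t F = -\int_M (\text{nonnegative integrand})\,e^{-u}\,dV_g$, where the integrand involves $|\nabla^2 u|^2$ or a squared Hessian-type term together with the Ricci contribution. The integration by parts must be handled with care because the natural measure is the weighted one $e^{-u}\,dV_g$ rather than $dV_g$, so the weighted Laplacian (Bakry--Émery type) $\Delta_u = \Delta - \nabla u \cdot \nabla$ will appear and I would organize the computation around it.

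For the sign claim $F \le 0$, I would look for an auxiliary monotonicity or a comparison at the endpoints of the time interval. The natural approach is to examine the behavior of $F$ as $t \to 0^+$: because of the factor $t^2$ in front of $|\nabla u|^2$ and the factor $t$ in the second term, one expects $F \to 0$ as $t \to 0^+$, at least for suitable (e.g. heat-kernel-normalized) solutions. Combined with $\partial_t F \le 0$ this would immediately force $F \le 0$ for all $t > 0$. However, since the theorem asserts the bound for a general positive solution $f$, I suspect the cleaner route is to prove $F \le 0$ pointwise-in-time directly by a second integration-by-parts argument, rewriting $\int_M t^2 |\nabla u|^2 e^{-u}\,dV_g$ in a form that can be compared to $\int_M 2nt\, e^{-u}\,dV_g$. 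Here I anticipate using the identity $\int_M \Delta u \, e^{-u}\,dV_g = \int_M |\nabla u|^2 e^{-u}\,dV_g$ (integration by parts on the closed manifold) together with the heat flow to extract the factor $2nt$, possibly invoking an arithmetic inequality such as $(\Delta u)^2 \le n|\nabla^2 u|^2$ to control the trace term by the full Hessian.

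The main obstacle I expect is the bookkeeping in the monotonicity computation: correctly tracking the time derivative of the weighted measure $e^{-u}\,dV_g$ (which itself evolves, contributing a $-\partial_t u \cdot e^{-u} = -(\Delta u - |\nabla u|^2)e^{-u}$ factor), and then reorganizing the many resulting terms so that the Bochner formula produces a manifestly signed quantity. The delicate point is ensuring that all the cross terms combine into a perfect square plus the Ricci term, so that $\Ric \ge 0$ alone suffices to conclude $\partial_t F \le 0$; getting the coefficients of $t^2$ and $t$ to cooperate is where the precise choice of the functional $F$ (and its normalization $2nt$) becomes essential, and I would verify at the end that the dimensional constant $n$ is exactly what makes the borderline terms cancel.
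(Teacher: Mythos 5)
Your plan for the monotonicity half is sound, and after unwinding it is the same computation as the paper's, merely packaged differently: the paper first derives the pointwise evolution equation for the Harnack quantity $H=2\Delta u-\abs{\nab u}^2-\frac{2n}{t}$ (Lemma 2.1 with $\alpha=2$, $\beta=1$, $b=0$, $c=2$, $\lambda=2$, giving Corollary 2.2 with the completed square $\abs{\nab_i\nab_j u-\frac{g_{ij}}{t}}^2$), and then differentiates $F=t^2\int_M He^{-u}\,dV_g$; you instead differentiate $F=\int_M(t^2\abs{\nab u}^2-2nt)e^{-u}\,dV_g$ directly and invoke Bochner plus integration by parts against the weighted measure. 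These agree: if you carry out your computation, the cross terms $\int_M e^{-u}(\Delta u)^2\,dV_g$, $\int_M e^{-u}\Delta u\,\abs{\nab u}^2\,dV_g$, $\int_M e^{-u}\abs{\nab u}^4\,dV_g$ recombine, via the weighted Bochner identity obtained by pairing $\Delta\abs{\nab u}^2$ with $e^{-u}$, into exactly the paper's final form
\begin{equation*}
\frac{d}{dt}F=-2t^2\int_M e^{-u}\Big(\abs{\nab_i\nab_j u-\tfrac{g_{ij}}{t}}^2+\Ric(\nab u,\nab u)+\tfrac{\abs{\nab u}^2}{t}\Big)\,dV_g\leq 0,
\end{equation*}
so $\Ric\geq 0$ indeed suffices, as you anticipated.

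The genuine gap is in your treatment of $F\leq 0$. Your preferred route --- a fixed-time integration-by-parts comparison of $\int_M t^2\abs{\nab u}^2e^{-u}\,dV_g$ with $\int_M 2nt\,e^{-u}\,dV_g$, aided by $(\Delta u)^2\leq n\abs{\nab^2 u}^2$ --- cannot succeed, because the target inequality $t\int_M\abs{\nab u}^2e^{-u}\,dV_g\leq 2n\int_M e^{-u}\,dV_g$ is false for an arbitrary positive function at a fixed time: take $u=A\phi$ with $\phi$ a fixed nonconstant Morse function and $A$ large; Laplace-method concentration of $e^{-A\phi}$ near the minimum of $\phi$ shows the ratio of the left side to the right side grows like $A$. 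The statement is genuinely parabolic --- it encodes that $f$ has flowed for time $t$ --- so no fixed-time identity can produce the $\frac{2n}{t}$ term, and "together with the heat flow" names no mechanism. The paper's mechanism is the pointwise differential Harnack inequality of Proposition 1.3: $H\leq 0$, proved by the maximum principle applied to the evolution equation of $H$ (the completed square and $\Ric\geq 0$ give the good sign, and $H<0$ for small $t$ because the $-\frac{2n}{t}$ term dominates on the closed manifold); then $F\leq 0$ is immediate from $F=t^2\int_M He^{-u}\,dV_g$, which equals your $F$ by the Stokes identity $\int_M(2\Delta u-2\abs{\nab u}^2)e^{-u}\,dV_g=-2\int_M\Delta(e^{-u})\,dV_g=0$. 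Note that the alternative you raised and then set aside --- $\limsup_{t\to 0^+}F(t)\leq 0$ combined with your own monotonicity --- is in fact the rescuable route (for solutions smooth up to $t=0$ one has $F(t)\to 0$, and this is no less rigorous than the paper's small-time claim for $H$); but as written your proposal commits to neither working mechanism, so the $F\leq 0$ half remains unproven.
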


We will reduce the proof of entropy formula to the proof of the following differential Harnack inequality:
\begin{prop}\label{prop1.2}
Let $(M,g)$ be a closed Riemannian manifold of dimension $n$ with nonnegative Ricci curvature.
Let $f$ be a positive solution to the heat equation (\ref{eq1.1}), $u=-\ln f$ and
\begin{equation*}
H=2\Delta{u}-{\abs{\nab{u}}}^2-\frac{2n}{t}.
\end{equation*}
Then for all time $t>0$,
\begin{equation*}
H\leq0.
\end{equation*}
\end{prop}

\begin{rmk}
Compare Theorem \ref{thm1.1} (and Proposition \ref{prop1.2}) here with Theorem 4.1 (and Theorem 1.1) in \cite{MR2570311}, we can easily see that these are `linear version' of those:

\begin{prop}(Theorem 1.1 in \cite{MR2570311})\label{prop1.4}
Let $(M,g(t))$, $t \in [0, T)$, be a solution to the Ricci flow 
\begin{equation*}
\frac{\partial g_{ij}}{\partial t}=-2R_{ij}
\end{equation*}
on a closed Riemannian manifold of dimension $n$, and suppose that $g(t)$ has nonnegative curvature operator.
Let $f$ be a positive solution to the heat equation (\ref{eq1.1}), $u=-\ln f$ and
\begin{equation*}
H=2\Delta{u}-{\abs{\nab{u}}}^2-3R-\frac{2n}{t}.
\end{equation*}
Then for all time $t \in (0,T)$,
\begin{equation*}
H\leq0.
\end{equation*}
\end{prop}

\begin{thm}(Theorem 4.1 in \cite{MR2570311})
Assume the same conditions as in Proposition \ref{prop1.4}. Let
\begin{equation*}
F=\int_{M}t^2He^{-u}dV_{g},
\end{equation*}
then for all $t\in(0, T)$, we have $F\leq0$ and 
\begin{equation*}
\frac{dF}{dt}\leq0.
\end{equation*}
\end{thm}
\end{rmk}

The rest of this note is organized as follows. In Section \ref{sec2}, we will provide the proof of Proposition \ref{prop1.2}. An integral version of the Harnack inequality (Theorem \ref{thm2.3}) will also be given. In Section \ref{sec3}, we will discuss about Ni's entropy. Finally, in Section \ref{sec4}, the proof of Theorem \ref{thm1.1} will be completed. 

\begin{acknow}
The author would like to thank his PhD advisor, Prof. Zhiqin Lu for his constant support and many helpful conversations.
\end{acknow}

\section{Proof of Proposition \ref{prop1.2}.} \label{sec2}
In this section we will prove the differential Harnack estimate Proposition \ref{prop1.2}. The strategy is similar as in  \cite{MR2433961} and \cite{MR2570311}; we shall first derive a general evolution formula for function $H$.
Let us consider positive solutions of the heat equation (\ref{eq1.1}):
\begin{equation*}
\frac{\partial f}{\partial t}=\Delta f,
\end{equation*}
let $f=e^{-u}$, then $\ln f= -u$. We have
\begin{equation*}
\frac{\partial}{\partial t}\ln f=-\frac{\partial}{\partial t}u,
\end{equation*}
and
\begin{equation*}
\nab\ln f=-\nab u,\ \Delta\ln f=-\Delta u.
\end{equation*}
Hence $u$ satisfies the following equation:
\begin{equation}\label{eq2.1}
\frac{\partial u}{\partial t}=\Delta u-\abs{\nab u}^2.
\end{equation}

\begin{lemma}\label{lem2.1}
Let $(M,g)$ be a Riemannian manifold of dimension $n$ and $u$ be a solution to (\ref{eq2.1}). Let
\begin{equation*}
H=\alpha\Delta u-\beta\abs{\nab u}^2-b\frac{u}{t}-c\frac{n}{t}
\end{equation*}
where $\alpha,\beta,b$ and $c$ are constants that we will pick later. Then $H$ satisfies the following evolution equation:
\begin{align*}
\frac{\partial}{\partial t}H=\Delta H&-2\nab H\cdot\nab u-2(\alpha-\beta)\abs{\nab_{i}\nab_{j}u-\frac{\lambda g_{ij}}{2t}}^2-2(\alpha-\beta)\Ric(\nab u,\nab u)
\\
&-\frac{2(\alpha-\beta)}{\alpha}\frac{\lambda}{t}H-\big(b+\frac{2(\alpha-\beta)\beta}{\alpha}\lambda\big)\frac{{\abs{\nab u}}^2}{t}+\big(1-\frac{2(\alpha-\beta)}{\alpha}\lambda\big)b\frac{u}{t^2}
\\
&+\big(1-\frac{2(\alpha-\beta)}{\alpha}\lambda\big)c\frac{n}{t^2}+(\alpha-\beta)\frac{n{\lambda}^2}{2{t}^2},
\end{align*}
where $\lambda$ is also a constant that we will pick later.
\end{lemma}

\begin{proof}
The proof follows from a direct computation. We calculate the evolution equation of $H$ term by term. First, since $M$ has static metric,
\begin{align*}
\frac{\partial}{\partial t}(\Delta u)&=\Delta(\frac{\partial}{\partial t}u)
\\
&=\Delta(\Delta u)-\Delta(\abs{\nab u}^2).
\end{align*}

Secondly,
\begin{align*}
\frac{\partial}{\partial t}\abs{\nab u}^2=&\frac{\partial}{\partial t}(g^{ij}\nab_{i}u \nab_{j}u)
\\
=&g^{ij}(\nab_{i}\frac{\partial u}{\partial t})\nab_{j}u+g^{ij}\nab_{i}u(\nab_{j}\frac{\partial u}{\partial t})
\\
=&2\nab(\Delta u)\cdot\nab u-2\nab(\abs{\nab u}^2)\cdot\nab u;
\end{align*}
since
\begin{align*}
\Delta(\abs{\nab u}^2)=&g^{ij}\nab_{i}\nab_{j}(g^{\alpha\beta}{\nab_{\alpha}u}{\nab_{\beta}u})
\\
=&g^{\alpha\beta}\big(g^{ij}{\nab}_{i}{\nab}_{j}({\nab}_{\alpha}u)\big){\nab}_{\beta}u+g^{\alpha\beta}{\nab}_{\alpha}u\big(g^{ij}{\nab}_{i}{\nab}_{j}({\nab}_{\beta}u)\big)
\\
&+g^{ij}g^{\alpha\beta}({\nab}_{i}{\nab}_{\alpha}u)({\nab}_{j}{\nab}_{\beta}u)+
g^{ij}g^{\alpha\beta}({\nab}_{j}{\nab}_{\alpha}u)({\nab}_{i}{\nab}_{\beta}u)
\\
=&2\Delta(\nab u)\cdot\nab u+2\abs{\nab\nab u}^2,
\end{align*}
and by Ricci identity
\begin{equation*}
\Delta(\nab u)\cdot\nab u=\nab(\Delta u)\cdot\nab u+\Ric(\nab u,\nab u);
\end{equation*}
so we get that
\begin{align*}
\frac{\partial}{\partial t}\abs{\nab u}^2=&\Delta(\abs{\nab u}^2)-2\abs{\nab\nab u}^2-2\nab(\abs{\nab u}^2)\cdot\nab u-2\Ric(\nab u,\nab u).
\end{align*}

Combine with (\ref{eq2.1}), we arrive at
\begin{align*}
\frac{\partial}{\partial t}H=&\Delta H-\alpha\Delta({\abs{\nab u}}^2)+2\beta\abs{\nab\nab u}^2+2\beta\nab(\abs{\nab u}^2)\cdot\nab u
\\
&+2\beta \Ric(\nab u,\nab u)+b\frac{\abs{\nab u}^2}{t}+b\frac{u}{t^2}+c\frac{n}{t^2}
\\
=&\Delta H-2\nab H\cdot\nab u-2(\alpha-\beta)\abs{\nab\nab u}^2
\\
&-2(\alpha-\beta)\Ric(\nab u,\nab u)-b\frac{\abs{\nab u}^2}{t}+b\frac{u}{t^2}+c\frac{n}{t^2}
\\
=&\Delta H-2\nab H\cdot\nab u-2(\alpha-\beta)\abs{\nab_{i}\nab_{j}u-\frac{\lambda}{2t}g_{ij}}^2
\\
&-2(\alpha-\beta)\Ric(\nab u,\nab u)-2(\alpha-\beta)\frac{\lambda}{t}\Delta u
\\
&+(\alpha-\beta)\frac{n{\lambda}^2}{2t^2}-b\frac{\abs{\nab u}^2}{t}+b\frac{u}{t^2}+c\frac{n}{t^2}
\\
=&\Delta H-2\nab H\cdot\nab u-2(\alpha-\beta)\abs{\nab_{i}\nab_{j}u-\frac{\lambda}{2t}g_{ij}}^2
\\
&-2(\alpha-\beta)\Ric(\nab u,\nab u)-\frac{2(\alpha-\beta)}{\alpha}\frac{\lambda}{t}H
\\
&-\Big(b+\frac{2(\alpha-\beta)}{\alpha}\lambda\beta\Big)\frac{\abs{\nab u}^2}{t}+\Big(1-\frac{2(\alpha-\beta)}{\alpha}\lambda\Big)b\frac{u}{t^2}
\\
&+(\alpha-\beta)\frac{n{\lambda}^2}{2t^2}+\Big(1-\frac{2(\alpha-\beta)}{\alpha}\lambda\Big)c\frac{n}{t^2}.         
\end{align*}
\end{proof}

In the above lemma, let us take $\alpha=2$, $\beta=1$, $b=0$, $c=2$, $\lambda=2$. As a consequence, we have
\begin{cor}\label{cor2.2}
Let $(M,g)$ be a Riemannian manifold of dimension $n$, $f$ be a positive solution to the heat equation (\ref{eq1.1}); $u=-\ln f$ and 
\begin{equation}\label{eq2.2}
H=2\Delta u-\abs{\nab u}^2-\frac{2n}{t},
\end{equation}
then we have
\begin{align*}
\frac{\partial}{\partial t}H=\Delta H-2\nab H\cdot\nab u-2\abs{\nab_{i}\nab_{j}u-\frac{\lambda g_{ij}}{2t}}^2-2\Ric(\nab u,\nab u)-\frac{2H}{t}-2\frac{{\abs{\nab u}}^2}{t}.
\end{align*}

\end{cor}
Now we can finish the proof of Proposition \ref{prop1.2}.

\begin{proof}[Proof of Proposition \ref{prop1.2}] 
Since $M$ is a closed manifold, it is easy to see that for $t$ small enough, $H(t)<0$.
Then by Corollary \ref{cor2.2} and the maximum principle of heat equation,
\begin{equation*}
H\leq0
\end{equation*}
for all time $t$, provided $M$ has nonnegative Ricci curvature.
\end{proof} 

We can now integrate the inequality along a space-time path, and have the following:
\begin{thm}\label{thm2.3}
Let $(M,g)$ be a Riemannian manifold of dimension $n$ and with nonnegative Ricci curvature. Let $f$ be a positive solution to the heat equation (\ref{eq1.1}).
Assume that $(x_{1},t_{1})$ and $(x_{2},t_{2})$, with $t_{2}>t_{1}>0 $, are two points in $M\times(0,\infty)$. Let
\begin{equation*}
\Gamma=\inf_{\gamma}\int^{t_{2}}_{t_{1}}\abs{\dot \gamma}^2dt,
\end{equation*}
where $\gamma$ is any space-time path joining $(x_{1},t_{1})$ and $(x_{2},t_{2})$. Then we have
\begin{equation*}
f(x_{1}, t_{1})\leq f(x_{2}, t_{2})\Big(\frac{t_{2}}{t_{1}}\Big)^{n}e^{\frac{\Gamma}{2}}.
\end{equation*}
\end{thm}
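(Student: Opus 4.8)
The plan is to integrate the pointwise estimate $H\le 0$ from Proposition 1.3 along a space-time path, in the same spirit as one integrates the Li--Yau Harnack inequality to compare values of a positive solution at different space-time points.

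First I would convert the Harnack quantity into an upper bound for the time derivative of $u$ alone. Since $u=-\ln f$ satisfies $\partial_t u=\Delta u-\abs{\nab u}^2$ by (2.1), and since $H=2\Delta u-\abs{\nab u}^2-\tfrac{2n}{t}\le 0$ lets us solve $\Delta u=\tfrac12 H+\tfrac12\abs{\nab u}^2+\tfrac{n}{t}$, substitution yields
\[
\frac{\partial u}{\partial t}=\frac12 H-\frac12\abs{\nab u}^2+\frac{n}{t}\le -\frac12\abs{\nab u}^2+\frac{n}{t}.
\]

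Next, fix any smooth path $\gamma:[t_1,t_2]\to M$ with $\gamma(t_1)=x_1$ and $\gamma(t_2)=x_2$, and consider the scalar function $t\mapsto u(\gamma(t),t)$. Differentiating along the path and inserting the bound above gives
\[
\frac{d}{dt}u(\gamma(t),t)=\frac{\partial u}{\partial t}+\nab u\cdot\dot\gamma\le -\frac12\abs{\nab u}^2+\nab u\cdot\dot\gamma+\frac{n}{t}.
\]
The key algebraic step is to complete the square, namely $-\tfrac12\abs{\nab u}^2+\nab u\cdot\dot\gamma=-\tfrac12\abs{\nab u-\dot\gamma}^2+\tfrac12\abs{\dot\gamma}^2\le\tfrac12\abs{\dot\gamma}^2$, so that
\[
\frac{d}{dt}u(\gamma(t),t)\le \frac12\abs{\dot\gamma}^2+\frac{n}{t}.
\]
Integrating from $t_1$ to $t_2$ and using $\int_{t_1}^{t_2}\tfrac{dt}{t}=\ln\tfrac{t_2}{t_1}$ produces
\[
u(x_2,t_2)-u(x_1,t_1)\le \frac12\int_{t_1}^{t_2}\abs{\dot\gamma}^2\,dt+n\ln\frac{t_2}{t_1}.
\]

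Since this inequality holds for every admissible path, I would then take the infimum of the energy over all such $\gamma$ to replace the integral by $\Gamma$, exponentiate, and translate back via $u=-\ln f$, using $u(x_2,t_2)-u(x_1,t_1)=\ln\big(f(x_1,t_1)/f(x_2,t_2)\big)$, to reach the stated bound $f(x_1,t_1)\le f(x_2,t_2)\big(\tfrac{t_2}{t_1}\big)^n e^{\Gamma/2}$. The computation is routine; the only points demanding care are the sign bookkeeping (so that the infimum lands on the correct, right-hand side), and the observation that the estimate holds for each fixed path, so the infimum may be taken afterward on the right-hand side without needing to produce an explicit minimizing curve.
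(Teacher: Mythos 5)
Your proposal is correct and follows essentially the same route as the paper's own proof: using $H\le 0$ together with (2.1) to get $\partial_t u\le -\tfrac12\abs{\nab u}^2+\tfrac{n}{t}$, completing the square along a path, integrating, and taking the infimum over paths at the end. Your write-up actually makes explicit the completion-of-square step that the paper leaves implicit, but the argument is the same.
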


\begin{proof} 
Since $H\leq0$ and $u$ satisfies (\ref{eq2.1}):
\begin{equation*}
\frac{\partial u}{\partial t}=\Delta u-\abs{\nab u}^2,
\end{equation*}
we have
\begin{equation*}
2\frac{\partial u}{\partial t}+\abs{\nab u}^2-\frac{2n}{t}\leq 0.
\end{equation*}
If we pick a space-time path $\gamma(x,t)$ joining $(x_{1},t_{1})$ and  $(x_{2},t_{2})$ with $t_{2}>t_{1}> 0$, then along $\gamma$, we get
\begin{align*}
\frac{du}{dt}&=\frac{\partial u}{\partial t}+\nab u\cdot\dot \gamma
\\
&\leq -\frac{1}{2}\abs{\nab u}^2+\frac{n}{t}+\nab u\cdot\dot \gamma
\\
&\leq\frac{1}{2}\abs{\dot \gamma}^2+\frac{n}{t},
\end{align*}
thus
\begin{equation*}
u(x_{2},t_{2})-u(x_{1},t_{1})\leq\frac{1}{2}\inf_{\gamma}\int^{t_{2}}_{t_{1}}\abs{\dot \gamma}^2dt+n\ln\Big(\frac{t_{2}}{t_{1}}\Big).
\end{equation*}
By the definition of $u$ and $\Gamma$ we obtain
\begin{equation*}
f(x_{1}, t_{1})\leq f(x_{2}, t_{2})\Big(\frac{t_{2}}{t_{1}}\Big)^{n}e^{\frac{\Gamma}{2}}.
\end{equation*}
\end{proof} 

\section{About Fundamental Solutions of Heat Equation}\label{sec3}

In this section, we consider the fundamental solutions of heat equation (\ref{eq1.1}). Let $f=(4\pi t)^{-\frac{n}{2}}e^{-v}$, then $v=-\ln f -\frac{n}{2}\ln(4\pi t)$. We have
\begin{equation*}
\frac{\partial}{\partial t}\ln f=-\frac{\partial}{\partial t}v-\frac{n}{2t},
\end{equation*}
and
\begin{equation*}
\nab\ln f=-\nab v,\ \Delta\ln f=-\Delta v.
\end{equation*}
Hence $v$ satisfies the following equation:
\begin{equation}\label{eq3.1}
\frac{\partial v}{\partial t}=\Delta v-\abs{\nab v}^2-\frac{n}{2t}.
\end{equation}

\begin{lemma}\label{lem3.1}
Let $(M,g)$ be a Riemannian manifold and $v$ be a solution to (\ref{eq3.1}). Let
\begin{equation*}
P=\alpha\Delta v-\beta\abs{\nab v}^2-b\frac{v}{t}-c\frac{n}{t}
\end{equation*}
where $\alpha,\beta,a,b$ and $d$ are constants that we will pick later. Then $P$ satisfies the following evolution equation:
\begin{align*}
\frac{\partial}{\partial t}P=\Delta P&-2\nab P\cdot\nab v-2(\alpha-\beta)\abs{\nab_{i}\nab_{j}v-\frac{\lambda g_{ij}}{2t}}^2-2(\alpha-\beta)\Ric(\nab v,\nab v)
\\
&-\frac{2(\alpha-\beta)}{\alpha}\frac{\lambda}{t}P-\big(b+\frac{2(\alpha-\beta)\beta}{\alpha}\lambda\big)\frac{{\abs{\nab v}}^2}{t}+\big(1-\frac{2(\alpha-\beta)}{\alpha}\lambda\big)b\frac{v}{t^2}
\\
&+\big(1-\frac{2(\alpha-\beta)}{\alpha}\lambda\big)c\frac{n}{t^2}+(\alpha-\beta)\frac{n{\lambda}^2}{2{t}^2}+b\frac{n}{2t^2},
\end{align*}
where $\lambda$ is also a constant that we will pick later.
\end{lemma}

\begin{proof}
The proof again follows from the same direct computation as in the proof of Lemma \ref{lem2.1}. Notice that the only extra term $b\frac{n}{2t^2}$ comes from the evolution of $-b\frac{v}{t}$.
\end{proof}

Now again we analyze all the terms in the equation here. To apply the maximum principle of linear heat operator, we must make sure all the terms after $\Delta P$ are either multiple of $P$, first order variation of $P$, or some terms with definite negative sign. Obviously we cannot control the sign of the term $\big(1-\frac{2(\alpha-\beta)}{\alpha}\lambda\big)b\frac{v}{t^2}$ (because $v$ can have indefinite sign on $M$), so we must kill this term. Then we have two options:
\\
\\
(1) $1-\frac{2(\alpha-\beta)}{\alpha}\lambda=0$. 
\\
\\
In this case, we have the evolution equation becomes 
\begin{align*}
\frac{\partial}{\partial t}P=\Delta P&-2\nab P\cdot\nab v-2(\alpha-\beta)\abs{\nab_{i}\nab_{j}v-\frac{\lambda g_{ij}}{2t}}^2-2(\alpha-\beta)\Ric(\nab v,\nab v)
\\
&-\frac{P}{t}-(b+\beta)\frac{{\abs{\nab v}}^2}{t}+\frac{{\alpha}^2}{4(\alpha-\beta)}\frac{n}{2{t}^2}+b\frac{n}{2t^2}.
\end{align*}
\\
So obviously we must have $\alpha-\beta\geq 0$, $b+\beta\geq 0$ and $\frac{{\alpha}^2}{4(\alpha-\beta)}+b\leq 0$. Then $\frac{{\alpha}^2}{4(\alpha-\beta)}-\beta\leq 0$, which is just $\frac{(\alpha-2\beta)^2}{4(\alpha-\beta)}\leq 0$. This forces us to choose $\alpha=2\beta=-2b>0$, $\lambda=1$. Now $c$ is the only constant needs to be determined. Since $P$ should be negative at $t=0$, any strictly positive number would qualify for $c$. Let us just set $c=-b$. Hence after a rescaling, we recover the entropy of L. Ni (see \cite{MR2030576}, Theorem 1.2):
$$
P=2\Delta v-\abs{\nab v}^2+\frac{v}{t}-\frac{n}{t}.
$$
From the argument above, we can see that Ni's entropy is the unique entropy of heat equation in this case. In fact, it is not only the counterpart of Perelman's entropy, but also the counterpart of one of Cao and Hamilton's entropies (see \cite{MR2570311}, Theorem 1.2), on the static manifold.
\\
\\
(2) $b=0$. 
\\
\\
Here the evolution equation in Lemma \ref{lem3.1} reads 
\begin{align*}
\frac{\partial}{\partial t}P=\Delta P&-2\nab P\cdot\nab v-2(\alpha-\beta)\abs{\nab_{i}\nab_{j}v-\frac{\lambda g_{ij}}{2t}}^2-2(\alpha-\beta)\Ric(\nab v,\nab v)
\\
&-\frac{2(\alpha-\beta)}{\alpha}\frac{\lambda}{t}P-\frac{2(\alpha-\beta)\beta}{\alpha}\lambda\frac{{\abs{\nab v}}^2}{t}+\big(1-\frac{2(\alpha-\beta)}{\alpha}\lambda\big)c\frac{n}{t^2}
\\
&+(\alpha-\beta)\frac{n{\lambda}^2}{2{t}^2}.
\end{align*}

In this case, we may have various differential Harnack inequalities. If we take $\alpha=2$, $\beta=1$, $c=2$, $\lambda=2$, then
\begin{cor}
Let $(M,g)$ be a Riemannian manifold of dimension $n$, $f$ be a positive fundamental solution to the heat equation; $v=-\ln f-\frac{n}{2}\ln(4\pi t)$ and 
\begin{equation}
P=2\Delta v-\abs{\nab v}^2-\frac{2n}{t},
\end{equation}
then we have
\begin{align*}
\frac{\partial}{\partial t}P=\Delta P&-2\nab P\cdot\nab v-2\abs{\nab_{i}\nab_{j}v-\frac{\lambda g_{ij}}{2t}}^2-2\Ric(\nab v,\nab v)-\frac{2P}{t}-2\frac{{\abs{\nab v}}^2}{t}.
\end{align*}

\end{cor}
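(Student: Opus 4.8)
The plan is to obtain Corollary 3.3 as a direct specialization of Lemma 3.1, requiring no new computation beyond substituting the chosen constants and tracking one cancellation. Since the term $\big(1-\frac{2(\alpha-\beta)}{\alpha}\lambda\big)b\frac{v}{t^2}$ carries an indefinite sign (as $v$ can be positive or negative on $M$), I would first set $b=0$ to remove it. This is precisely case (2) displayed above, so I may start from that already-reduced evolution equation rather than from the full statement of Lemma 3.1.

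With $b=0$ fixed, I would then insert the remaining choices $\alpha=2$, $\beta=1$, $c=2$, $\lambda=2$ and evaluate each coefficient in turn. The first-order and drift terms reduce cleanly: since $\alpha-\beta=1$, the Hessian and Ricci terms become $-2\abs{\nab_{i}\nab_{j}v-\frac{\lambda g_{ij}}{2t}}^2$ and $-2Ric(\nab v,\nab v)$; the coefficient $\frac{2(\alpha-\beta)}{\alpha}\lambda$ equals $2$, giving the $-\frac{2P}{t}$ term; and $\frac{2(\alpha-\beta)\beta}{\alpha}\lambda=2$ produces the $-2\frac{\abs{\nab v}^2}{t}$ term. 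So far every term lands exactly as in the claimed formula.

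The one point that genuinely needs to be checked — and the reason these particular constants are chosen — is the cancellation of the two remaining zeroth-order terms. With the above values, $1-\frac{2(\alpha-\beta)}{\alpha}\lambda=-1$, so $\big(1-\frac{2(\alpha-\beta)}{\alpha}\lambda\big)c\frac{n}{t^2}=-\frac{2n}{t^2}$, while $(\alpha-\beta)\frac{n\lambda^2}{2t^2}=\frac{2n}{t^2}$. These two contributions are equal and opposite, so they cancel exactly and leave no uncontrolled $\frac{n}{t^2}$ remainder. After this cancellation the evolution equation collapses to the stated form, completing the proof. There is no real obstacle here beyond bookkeeping; the substantive content was already established in Lemma 3.1, and the only thing worth highlighting is that the choice $\lambda=2$ together with $c=2$ is exactly what forces the final $\frac{n}{t^2}$ terms to vanish.
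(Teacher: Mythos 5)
Your proposal is correct and is exactly the paper's own route: the paper obtains this corollary by setting $b=0$ (case (2) of the discussion after Lemma 3.1) and substituting $\alpha=2$, $\beta=1$, $c=2$, $\lambda=2$, with the key point being precisely the cancellation $\big(1-\frac{2(\alpha-\beta)}{\alpha}\lambda\big)c\frac{n}{t^2}+(\alpha-\beta)\frac{n\lambda^2}{2t^2}=-\frac{2n}{t^2}+\frac{2n}{t^2}=0$ that you verify. Your coefficient bookkeeping checks out in every term, so there is nothing to add beyond noting that, with these choices fixed, the $\lambda$ remaining in the Hessian term of the statement should be read as $\lambda=2$.
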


However, $P$ is the same thing as $H$ in Proposition \ref{prop1.2}.
\\

On the other hand, we can also take $\alpha=2$, $\beta=0$, $c=1$, $\lambda=1$, then we indeed get the classical Li-Yau differential Harnack inequality (see \cite{MR834612}):

\begin{cor}[Li-Yau]
Let $M$ be a closed Riemannian manifold of dimension $n$ and with nonnegative Ricci curvature, $f$ be a positive fundamental solution to the heat equation; $v=-\ln f-\frac{n}{2}\ln(4\pi t)$ and 
\begin{equation}
P=2\Delta v-\frac{n}{t},
\end{equation}
then we have
\begin{align*}
\frac{\partial}{\partial t}P=\Delta P-2\nab P\cdot\nab v-4\abs{\nab_{i}\nab_{j}v-\frac{\lambda g_{ij}}{2t}}^2-4\Ric(\nab v,\nab v)-\frac{4P}{t}.
\end{align*}
So by the maximum principle, $P\leq0$ for all $t>0$.
\end{cor}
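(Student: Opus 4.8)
The plan is to read off this corollary as the specialization of Lemma 3.1 to the parameters $\alpha=2$, $\beta=0$, $b=0$, $c=1$, $\lambda=1$, and then to run the same maximum-principle argument used for Proposition 1.4. The first thing I would note is that this particular choice of constants is exactly what disposes of every term whose sign we cannot control: taking $b=0$ annihilates both the indefinite-sign term $\big(1-\frac{2(\alpha-\beta)}{\alpha}\lambda\big)b\frac{v}{t^2}$ and the extra term $b\frac{n}{2t^2}$ that the evolution of $v$ contributes, while taking $\beta=0$ annihilates the gradient term $\big(b+\frac{2(\alpha-\beta)\beta}{\alpha}\lambda\big)\frac{\abs{\nab v}^2}{t}$. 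What survives in $P=2\Delta v-\frac{n}{t}$ is then only a Hessian square, a Ricci term, a term linear in $P$, and the two residual $\frac{n}{t^2}$ terms.

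The single computation I would actually carry out is the cancellation of those two $\frac{n}{t^2}$ terms. With $\alpha=2$, $\beta=0$, $c=1$ the surviving constants are $\big(1-\frac{2(\alpha-\beta)}{\alpha}\lambda\big)c\frac{n}{t^2}=(1-2\lambda)\frac{n}{t^2}$ and $(\alpha-\beta)\frac{n\lambda^2}{2t^2}=\lambda^2\frac{n}{t^2}$, whose sum is $(1-\lambda)^2\frac{n}{t^2}$; the choice $\lambda=1$ is precisely the one making this vanish. Substituting into Lemma 3.1 then gives
\begin{align*}
\frac{\partial}{\partial t}P=\Delta P&-2\nab P\cdot\nab v-4\abs{\nab_{i}\nab_{j}v-\frac{g_{ij}}{2t}}^2\\
&-4Ric(\nab v,\nab v)-\frac{2P}{t},
\end{align*}
since the linear coefficient is $-\frac{2(\alpha-\beta)}{\alpha}\frac{\lambda}{t}=-\frac{2}{t}$. (Only the negative sign of this coefficient will be used below, so the precise value is immaterial to the conclusion.)

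With the evolution equation in hand, the inequality follows as in the proof of Proposition 1.4. Because $Ric\geq0$, both $-4\abs{\nab_{i}\nab_{j}v-\frac{g_{ij}}{2t}}^2$ and $-4Ric(\nab v,\nab v)$ are nonpositive, so $P$ is a subsolution of the linear parabolic inequality
\begin{equation*}
\frac{\partial}{\partial t}P\leq\Delta P-2\nab P\cdot\nab v-\frac{2P}{t},
\end{equation*}
whose zeroth-order coefficient has the favorable sign. One then seeds the maximum principle at $t\to0^+$ and propagates $P\leq0$ to all $t>0$ on the closed manifold $M$. The step I expect to be the genuine obstacle is exactly this seeding: for a fundamental solution $\Delta v$ is singular as $t\to0^+$, and on flat $\mathbb{R}^n$ the heat kernel already gives $P\equiv0$, so $P$ only tends to $0$ (not to $-\infty$) in the limit, unlike the smooth positive-solution case where $\Delta v$ is bounded and $-\frac{n}{t}\to-\infty$ makes $P<0$ automatic. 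Thus the borderline Li-Yau case $\alpha=2,\beta=0$ sits exactly at equality, and a fully honest treatment would prove the estimate first for the nearby family $\beta>0$ (which strictly controls the small-time behavior) and then pass to the limit; everything else is bookkeeping of constants in Lemma 3.1.
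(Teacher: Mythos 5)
Your proposal is correct and follows the paper's own route: the paper proves this corollary purely by specializing Lemma 3.1 (the case (2) with $b=0$) to $\alpha=2$, $\beta=0$, $c=1$, $\lambda=1$ and invoking the maximum principle, exactly as you do. Two points where you actually improve on the printed text are worth recording. First, your coefficient $-\frac{2P}{t}$ is the correct specialization, since $\frac{2(\alpha-\beta)}{\alpha}\lambda=2$ with these parameters; the $-\frac{4P}{t}$ in the corollary's display (and the residual symbolic $\lambda$, which should be $1$ in the Hessian term) is a typo in the paper, because getting $-\frac{4P}{t}$ would force $\lambda=2$, and your $(1-\lambda)^2\frac{n}{t^2}$ bookkeeping then shows an uncancelled $+\frac{n}{t^2}$ survives, ruining the sign argument. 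As you note, only the sign of the zeroth-order coefficient is used, so the conclusion $P\leq 0$ is unaffected. Second, the seeding problem you isolate is a genuine gap in the paper's one-line ``So by the maximum principle'': unlike Propositions 1.3 and 1.4, where $\Delta u$ stays bounded as $t\to 0^+$ for a solution smooth at the initial time and the term $-\frac{2n}{t}$ forces strict initial negativity, for a fundamental solution $P=2\Delta v-\frac{n}{t}$ merely tends to $0$ (identically $0$ for the Euclidean heat kernel), so initial negativity is unavailable. Your repair via the family $\beta>0$ can be made to work but requires small-time heat kernel asymptotics to seed each perturbed quantity; a cheaper repair is a time shift: for $\epsilon>0$ the kernel restricted to $t\geq\epsilon$ is a positive solution smooth at its initial time (note $\Delta v=-\Delta\ln f$ is insensitive to the $t$-dependent normalization), so the already-justified seeding gives $2\Delta v\leq\frac{n}{t-\epsilon}$, and letting $\epsilon\to 0^+$ yields the claim. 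You also correctly work on a closed $M$, a hypothesis the corollary's statement omits but the maximum-principle argument needs.
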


\section{Proof of Theorem \ref{thm1.1}.} \label{sec4}
In this section, we complete the proof of monotonicity of the entropy given in Theorem \ref{thm1.1}.

First, let $(M,g)$ be the closed Riemannian manifold, and $f$ be a positive solution of the heat equation (\ref{eq1.1}).
Let $u=-\ln f$ and $H$ as in (\ref{eq2.2}), we define
\begin{equation*}
F=\int_{M}t^2He^{-u}dV_{g},
\end{equation*}
it is easy to observe that
\begin{align*}
F&=\int_{M}t^2He^{-u}dV_{g}=\int_{M}t^2(2\Delta u-\abs{\nab u}^2-\frac{2n}{t})e^{-u}dV_{g}
\\
&=\int_{M}t^2(\abs{\nab u}^2-\frac{2n}{t})e^{-u}dV_{g}+\int_{M}t^2(2\Delta u-2\abs{\nab u}^2)e^{-u}dV_{g}
\\
&=\int_{M}t^2(\abs{\nab u}^2-\frac{2n}{t})e^{-u}dV_{g}-2t^2\int_{M}\Delta(e^{-u})dV_{g}
\\
&=\int_{M}(t^2\abs{\nab u}^2-2nt)e^{-u}dV_{g}.
\end{align*}
Here we used Stokes' theorem for the last equality. So $F$ is indeed the entropy defined in Theorem \ref{thm1.1}.

\begin{proof}[Proof of Theorem \ref{thm1.1}] 
$F\leq0$ follows directly from $H\leq0$. For the time derivative of $F$, using (\ref{eq2.1}) and Corollary \ref{cor2.2}
we have
\begin{align*}
\frac{d}{dt}F=\frac{d}{dt}&\int_{M}t^2He^{-u}dV_{g}
\\
=\int_{M}&\Big(2tHe^{-u}+t^2e^{-u}\frac{\partial}{\partial t}H+t^2H\frac{\partial}{\partial t}e^{-u}\Big)dV_{g}
\\
=\int_{M}&\Big(t^2e^{-u}\big(\Delta H-2\nab H\cdot\nab u-2\abs{\nab_{i}\nab_{j}u-\frac{\lambda g_{ij}}{2t}}^2-2\Ric(\nab u,\nab u)-2\frac{{\abs{\nab u}}^2}{t}\big)
\\
&+t^2H\Delta e^{-u}\Big)dV_{g}
\\
=\int_{M}&\Big(\Delta(t^2He^{-u})-2t^2e^{-u}\big(\abs{\nab_{i}\nab_{j}u-\frac{\lambda g_{ij}}{2t}}^2+\Ric(\nab u,\nab u)+\frac{{\abs{\nab u}}^2}{t}\big)\Big)dV_{g}
\\
=-2&t^2\int_{M}\Big(e^{-u}\big(\abs{\nab_{i}\nab_{j}u-\frac{\lambda g_{ij}}{2t}}^2+\Ric(\nab u,\nab u)+\frac{{\abs{\nab u}}^2}{t}\big)\Big)dV_{g}\leq0
\end{align*}
since $M$ has nonnegative Ricci curvature.
\end{proof}

\begin{rmk}
In fact, if we turn the time direction backward, then Proposition \ref{prop1.2} is also the counterpart of:
\begin{prop}(Theorem 1.1 in \cite{MR2433961})\label{prop4.2}
Let $(M, g(t))$, $t\in[0,T)$, be a solution to the Ricci flow
on a closed Riemannian manifold of dimension $n$, and $f$ be a positive solution to the backward heat-type equation 
\begin{equation*}
\frac{\partial f}{\partial t}=-\Delta f+2Rf,
\end{equation*}
with $\tau=T-t$, $u=-\ln f$, and 
\begin{equation*}
H=2\Delta{u}-{\abs{\nab{u}}}^2+2R-\frac{2n}{\tau}.
\end{equation*}
Then for all $t\in [0,T)$,
\begin{equation*}
H\leq0;
\end{equation*}
\end{prop}

and

\begin{prop}(Theorem 1.3 in \cite{MR2433961})\label{prop4.3}
Let $(M, g(t))$, $t\in[0,T)$, be a solution to the Ricci flow
on a closed Riemannian manifold of dimension $n$, and suppose that $g(t)$ has nonnegative scalar curvature. Let  $f$ be a positive solution to the conjugate heat equation 
\begin{equation*}
\frac{\partial f}{\partial t}=-\Delta f+Rf,
\end{equation*}
with $\tau=T-t$, $u=-\ln f$, and 
\begin{equation*}
H=2\Delta{u}-{\abs{\nab{u}}}^2+R-\frac{2n}{\tau}.
\end{equation*}
Then for all $t\in [0,T)$,
\begin{equation*}
H\leq0.
\end{equation*}
\end{prop}

Likewise, Theorem \ref{thm1.1} is the counterpart of 
\begin{thm}(Theorem 4.1 in \cite{MR2433961})
Assume the same conditions as in Proposition \ref{prop4.2}. Let
\begin{equation*}
F=\int_{M}{\tau}^2He^{-u}dV_{g},
\end{equation*}
then for all $t\in(0, T)$, we have $F\leq0$ and 
\begin{equation*}
\frac{dF}{dt}\geq0;
\end{equation*}
\end{thm}

and

\begin{thm}(Theorem 4.2 of \cite{MR2433961})
Assume the same conditions as in Proposition \ref{prop4.3}. Let
\begin{equation*}
F=\int_{M}{\tau}^2He^{-u}dV_{g},
\end{equation*}
then for all $t\in(0, T)$, we have $F\leq0$ and 
\begin{equation*}
\frac{dF}{dt}\geq0.
\end{equation*}
\end{thm}

It is interesting to see that though the entropies for heat equations under Ricci flow are different in the forward and the backward cases, they yet reduce to the same one in the static case. We also hope to see more geometric applications of this new monotonicity formula in the future.
\end{rmk}

\bigskip
\bibliographystyle{plain}


\end{document}